\newtheorem{theorem}{Theorem}[section]
\newtheorem{cor}[theorem]{Corollary}
\newtheorem{lemma}[theorem]{Lemma}
\theoremstyle{definition}
\newtheorem{remark}[theorem]{Remark}
\DeclareMathOperator{\Gal}{Gal}
\DeclareMathOperator{\Ban}{Ban}
\newcommand{\nc}{\newcommand}
\newcommand{\ner}{\mb{\tiny Ner}}
\newcommand{\Hom}{\mathrm{Hom}}
\newcommand{\End}{\mathrm{End}}
\newcommand{\Ext}{\mathrm{Ext}}
\newcommand{\beqar}{\begin{eqnarray*}}
\newcommand{\eeqar}{\end{eqnarray*}}
\newcommand{\oldmarginpar}[1]{}
\begin{document}
\nc{\Mhf}{{\bf M}_{0,H'}'}
\newcommand{\Muf}{{\bf M}'_{bal.U_1(p),H'}}
\newcommand{\Mho}{M_{0,H}}
\newcommand{\Muo}{M_{bal.U_1(p),H}}
\newcommand{\Mh}{M'_{0,H'}}
\newcommand{\Mu}{M'_{bal.U_1(p),H'}}
\newcommand{\Mucan}{{M'}_{bal.U_1(p),H'}^{{can}}}
\newcommand{\oMh}{\overline{M'}_{0,H'}}
\newcommand{\oMu}{\overline{M'}_{bal.U_1(p),H'}}
\newcommand{\pomega}{\omega^+}
\newcommand{\Eg}{{E'_1}\mid_S}
\newcommand{\Ig}{\mb{Ig}}

\newcommand{\Fab}{F_{\tiny \mb{abs}}}
\newcommand{\Frel}{F_{\tiny \mb{rel}}}
\nc{\sym}{\Psi}

\nc{\Gm}{\mathbb{G}_m}

\newcommand{\B}{\mathbb{B}}
\newcommand{\LT}{\mathcal{LT}}
\newcommand{\M}{\mathcal{M}}
\newcommand{\Ind}{\mathrm{Ind}}
\newcommand{\MH}{{\widehat{M}}'_{0,H'}}
\newcommand{\MU}{{\widehat{M}}'_{bal.U(p),H'}}
\newcommand{\Xfo}{{\widehat{X}}_!}
\newcommand{\Xfor}{{\widehat{X}}_{!!}}
\newcommand{\Xp}{\overline{X}_!}
\newcommand{\Xpp}{\overline{X}_{!!}}
\newcommand{\opi}{\overline{\pi}}
\newcommand{\ohat}[1]{\widehat{#1}}
\newcommand{\fsharp}{{\mathbf f}^\sharp_{{\tiny \Pi}}}
\newcommand{\mfrak}[1]{\mathfrak{#1}}
\newcommand{\bA}{{\bf A}}
\newcommand{\Mig}{\overline{M'}_{Ig,H'}}
\newcommand{\gp}{\mathfrak{p}}
\newcommand{\gP}{\mathfrak{P}}
\newcommand{\gl}{\mathfrak{l}}
\newcommand{\sA}{\mathcal{A}}
\newcommand{\et}{\acute{e}t}
\newcommand{\C}{\mathbb{C}}

\newcommand{\bF}{\mathbb{F}}
\newcommand{\cX}{\mathcal{X}}
\newcommand\A{\mathbb{A}}

\newcommand{\G}{\mathrm{G}}

\newcommand\T{\mathbb{T}}

\newcommand\sE{{\mathcal{E}}}
\newcommand\tE{{\mathbb{E}}}
\newcommand\sF{{\mathcal{F}}}
\newcommand\sG{{\mathcal{G}}}
\newcommand\GL{{\mathrm{GL}}}
\newcommand\bM{\mathbb{M}}
\newcommand\HH{{\mathrm H}}
\newcommand\J{\mathfrak{J}}
\newcommand\sT{\mathcal{T}}
\newcommand\Qbar{{\bar{\Q}}}
\newcommand\sQ{{\mathcal{Q}}}
\newcommand\sP{{\mathbb{P}}}
\newcommand{\Q}{\mathbb{Q}}
\newcommand{\cG}{\mathcal{G}}
\newcommand{\cM}{\mathcal{M}}
\newcommand{\tH}{\mathbb{H}}
\newcommand{\Z}{\mathbb{Z}}
\newcommand{\R}{\mathbb{R}}
\newcommand{\F}{\mathbb{F}}
\newcommand\fP{\mathfrak{P}}
\newcommand{\Mod}{\mathrm{Mod}}
\newcommand{\Ou}{\mathcal{O}}
\newcommand{\legendre}[2] {\left(\frac{#1}{#2}\right)}

\def \vp{\varphi}
\newcommand{\wh}[1]{\widehat{#1}}
\newcommand{\pip}{\Pi}
\newcommand{\Xdag}{\widehat{X}_\dagger}

\theoremstyle{definition}
\newtheorem{dfn}{Definition}

\theoremstyle{remark}

\newcommand {\stk} {\stackrel}
\newcommand{\map}{\rightarrow}
\def \bX{{\bf X}}
\newcommand {\orho}{\overline{\rho}}

\newcommand{\hGm}{\wh{\G}_m}

\newcommand{\Ner}[1]{{#1}^{\ner}}
\newcommand{\red}[1]{{\color{red} #1}}
\newcommand{\blue}[1]{{\color{blue} #1}}

\theoremstyle{remark}
\newtheorem*{fact}{Fact}

\makeatletter
\def\imod#1{\allowbreak\mkern10mu({\operator@font mod}\,\,#1)}
\makeatother

\title{A note on extensions of $p$-adic representations of $\mathrm{GL}_2(\mathbb{Q}_p)$} 
\author{Debargha Banerjee}
\author{Srijan Das}
\address{INDIAN INSTITUTE OF SCIENCE EDUCATION AND RESEARCH, PUNE, INDIA}
\thanks{The first author was partially supported by the SERB grant CRG/2020/000223. The second author was supported by the Prime Minister's Research Fellowship. It is a pleasure to acknowledge several helpful emails and advice from Professor Gabriel Dospinescu. We also wish to thank Professor Vytautas Pa{\v s}k\={u}nas for his comments on an early draft of the paper. The first author is indebted to MPIM for the $2023$ grant. The authors are grateful to the anonymous referee for a careful reading of the manuscript and for suggestions that have significantly improved the exposition.}
\subjclass[2020]{Primary: 11F80, Secondary: 11F11}
\keywords{$p$-adic local Langlands, $\Ext$ groups, Drinfeld spaces.}

\begin{abstract}
We compute extension groups in the category of duals of $p$-adic Banach space representations of $\mathrm{GL}_2(\mathbb{Q}_p)$. Focusing on representations arising from the $p$-adic local Langlands correspondence for generic Galois representations, we classify these extensions completely. These results are then applied to prove the vanishing of extensions between the
dual $p$-adic Banach space representations attached to reducible Galois representations and supercuspidal Galois isotypic components of the $p$-adic \' etale cohomology of the finite level Drinfeld spaces. 
\end{abstract}

\maketitle
\begin{center} {\it This paper is dedicated to the fond memory of the first author's mother, the late Mrs. Snigdha Banerjee.} \end{center}

\section{Introduction} 
\label{Introduction}

Fix a prime $p$ and let $\G_{\Q_p}: = \Gal(\overline{\Q}_p/\Q_p)$ denote the absolute Galois group of $\Q_p$. The study of $\ell$-adic representations of $\G_{\Q_p}$, in the case where $\ell=p$, was pioneered by Fontaine, who laid the foundations of $p$-adic Hodge theory, and was further developed by Berger~\cite{BergerReduction} and Colmez~\cite{Colmez-Montreal}. The $p$-adic Langlands program, initiated by Breuil and further developed by Berger, Colmez, Dospinescu, Emerton, Kisin, and Pa{\v s}k\={u}nas, associates a $p$-adic admissible unitary Banach space representation $\Pi(V)$ of the group $\G := \GL_2(\Q_p)$ to a representation $V$ of $\G_{\Q_p}$ of dimension $\leq 2$.

While the bijection between the isomorphism classes of the above categories of representations is well-understood, a deeper understanding of the correspondence requires analyzing the derived category of representations; specifically, the extension groups between them. These extensions are central to the compatibility between the deformation theory of Galois representations and the deformation theory of $\G$-representations \cite{Paskunas13}.

Significant progress has been made in computing extension groups in the context of smooth mod $p$ representations of $\G$. Emerton \cite{Ordinary2} calculated the $\Ext^{\bullet}$ groups in the category of locally admissible mod $p$ representations of $\G$ as an application of his computation of the right derived functors of the ordinary parts. Breuil and Pa{\v s}k\={u}nas \cite{BreuilPaskunas} investigated similar extensions for $\GL_2(\Ou_F)$ (where $\Ou_F$ is the ring of integers of some finite extension $F/\Q_p$), while Colmez \cite{Colmez-Montreal} also performed foundational calculations for $\G$ in characteristic $p$. In \cite{Paskunasextensions}, Pa{\v s}k\={u}nas completed the computation of $\Ext^1(V_1, V_2)$ for absolutely irreducible mod $p$ representations $V_1, V_2$ (treating the previously open cases where $V_1$ is supersingular and $V_2$ is supersingular or a twist of Steinberg). 

However, in characteristic $0$, the literature is sparse. Hauseux \cite{MR3480966, MR3636753} has determined the extensions between unitary continuous $p$-adic and smooth mod $p$ principal series representations of $G'(F)$ in the generic case for split connected reductive groups $G'$. In \cite{Paskunas2adic}, Pa{\v s}k\={u}nas has calculated $\Ext^1$ between two supercuspidal representations of $\G$ in the $p$-adic setting.

The novelty of this short note lies in the computation of extensions in the category of duals of finite length admissible $p$-adic Banach space representations, specifically when one of the representations is dual of a non-ordinary representation. We say that a $p$-adic Banach space representation is non-ordinary if it is not a subquotient of a principal series representation. This dual category arises naturally in Colmez's construction of the $p$-adic local Langlands correspondence \cite{Colmez-Montreal}. If $D$ denotes the $(\varphi,\Gamma)$-module attached to a $p$-adic Galois representation $V$ via Fontaine's equivalence \cite{MR1106901}, then for $\delta := \varepsilon^{-1}\det V$ (where $\varepsilon$ is the $p$-adic cyclotomic character), the global sections of Colmez's sheaf $D \boxtimes_{\delta} \mathbb{P}^1$ fit into the fundamental exact sequence \cite{Colmez-Montreal}:
\begin{equation}\label{decompose_global_sections}
0 \to \Pi(V)^*\otimes\delta \to D\boxtimes_\delta\mathbb{P}^1 \to \Pi(V) \to 0.
\end{equation}
Here $\Pi(V)^*$ denotes the continuous dual equipped with the compact-open topology. The functor $\mathbf{V}$ (Colmez's Montr\'eal functor) and $\Pi$ satisfy the relation $\mathbf{V}(\Pi(V)) \cong V$.

Since the category of admissible representations with a central character on $p$-adic Banach spaces does not have enough injectives, we utilize Yoneda's construction to compute the extension groups in $\Ban_{\G,\zeta}^{\mathrm{adm}}(E)$ (cf. Sec \ref{padicLLC} for the definition), denoted by $\Ext_{\G, \zeta}^i$. Note that in the presence of enough injectives, Yoneda's definition is equivalent to the derived functor construction.

Our main result provides a complete classification of these extensions under a suitable genericity condition. Let \( E \) be a finite extension of \( \mathbb{Q}_p \), with ring of integers \( \mathcal{O} \), uniformizer \( \varpi \), and residue field \( \kappa := \mathcal{O}/(\varpi) \). We denote by \( \omega \) the mod $p$ cyclotomic character.

Let \( V \) be an absolutely irreducible two-dimensional \( E \)-representation of \( \mathrm{G}_{\mathbb{Q}_p} \), and let \( \overline{V} \) denote the semisimplification of its reduction modulo \( p \). We say that \( \overline{V} \) is \emph{generic} (in the sense of Pa\v{s}k\=unas~\cite{Paskunas13}) if \( \overline{V} \) is irreducible or \( \overline{V} \cong \psi_1 \oplus \psi_2 \) is split with characters \( \psi_1 \), \( \psi_2 \) satisfying $\psi_1 \psi_2^{-1} \notin \{1, \omega^{\pm 1}\}$.

\begin{theorem}
\label{main thm}
Let $V$ be an absolutely irreducible two-dimensional representation of $\G_{\Q_p}$ over $E$ such that its reduction $\overline{V}$ is generic. Let  $\pi$ be an absolutely irreducible admissible unitary $E$-Banach space representation of $\G$. If $\mathbf{V}(\pi) \neq V$, then
    \[
        \Ext_{\G,\zeta}^i(\pi^{*}, \Pi(V)^{*}) = 0 \quad \forall\, i \ge 0,
    \]
    where $\mathbf{V}$ denotes Colmez's Montr\'eal functor. 
Suppose that $\mathbf{V}(\pi)=V$. Then for every integer $i \ge 0$,
\[
\Ext_{\G,\zeta}^i\bigl(\pi^{*},\,\Pi(V)^{*}\bigr)\cong
\begin{cases}
E^{\binom{3}{i}}, & 0\le i\le 3,\\[4pt]
0, & i\ge 4.
\end{cases}
\]
\end{theorem}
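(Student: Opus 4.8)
The plan is to compute $\Ext^i_\G(\pi^*, \Pi(V)^*)$ by dualizing and then exploiting the fundamental exact sequence \eqref{decompose_global_sections}. First I would use the anti-equivalence between $Ban^{\mathrm{adm}}_{\G,\zeta}(E)$ and its category of duals to identify $\Ext^i_\G(\pi^*, \Pi(V)^*) \cong \Ext^i_\G(\Pi(V), \pi)$ (Yoneda $\Ext$ is sent to Yoneda $\Ext$ by a contravariant equivalence, so indices are preserved, not reversed). Thus the problem reduces to understanding $\Ext^i_\G(\Pi(V), \pi)$ for $\pi$ absolutely irreducible admissible unitary. The case $\mathbf V(\pi)\neq V$ should follow from a block-decomposition argument: by Paškūnas's classification of blocks in the generic case, $\Pi(V)$ and $\pi$ lie in different blocks of the category (the block of $\Pi(V)$ is determined by $\overline V$ and the $\varpi$-adically continuous structure, and $\mathbf V$ separates these), so all higher $\Ext$ groups vanish, giving the first assertion.

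For the case $\mathbf V(\pi)=V$, since $V$ is absolutely irreducible two-dimensional with generic reduction, $\Pi(V)$ is itself absolutely irreducible, so $\mathbf V(\pi)=V=\mathbf V(\Pi(V))$ forces $\pi \cong \Pi(V)$ by injectivity of $\mathbf V$ on the relevant representations. Hence I need $\Ext^i_\G(\Pi(V),\Pi(V))$. The key input is that, in the generic case, Paškūnas~\cite{Paskunas13} identifies the block of $\Pi(V)$ with the category of modules over a deformation ring, and the deformation problem for an absolutely irreducible two-dimensional $V$ of $\G_{\Q_p}$ is unobstructed of relative dimension $3$ (this is the standard computation $\dim H^1(\G_{\Q_p},\mathrm{ad}\,V)=3$, $H^2=0$ by local Euler characteristic and local duality). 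Consequently the relevant local ring is a power series ring in $3$ variables over $E$, and the $\Ext$-algebra $\Ext^\bullet_\G(\Pi(V),\Pi(V))$ is the Koszul dual, namely the exterior algebra $\Lambda^\bullet(E^3)$; this yields $\Ext^i \cong E^{\binom 3i}$ for $0\le i\le 3$ and $0$ otherwise.

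The main obstacle is making the passage from ``deformation ring is a power series ring in $3$ variables'' to ``$\Ext$-algebra is the exterior algebra on $3$ generators'' precise in the Banach-space setting. Concretely, I would argue as follows: Paškūnas produces a projective generator $\widetilde P$ of the block with $\End_\G(\widetilde P) \cong R_V$, the (framed-divided) universal deformation ring, and $\Pi(V)$ corresponds to the residue field $E$ as an $R_V$-module; then $\Ext^i_\G(\Pi(V),\Pi(V)) \cong \Ext^i_{R_V}(E,E)$, and since $R_V \cong E[[x_1,x_2,x_3]]$ is regular of dimension $3$, the Koszul resolution computes $\Ext^i_{R_V}(E,E) \cong \Lambda^i(\mathfrak m/\mathfrak m^2)^\vee \cong E^{\binom 3i}$. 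One must check that the equivalence of categories is exact and preserves Yoneda $\Ext$ in all degrees (not merely $\Ext^1$), that $R_V$ is genuinely formally smooth of dimension exactly $3$ over $E$ under the genericity hypothesis on $\overline V$ (using that $\overline V$ generic rules out the obstructed cases, and that an absolutely irreducible $V$ has scalar endomorphisms so there is no gap between framed and unframed deformations beyond the trivial $\mathrm{PGL}_2$-torsor), and that passing from $\overline V$-deformations with fixed determinant versus varying determinant matches the central character normalization $\zeta$ built into $Ban^{\mathrm{adm}}_{\G,\zeta}(E)$ — this bookkeeping of the determinant/central-character twist is where I expect the most care to be needed.
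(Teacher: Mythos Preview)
Your argument in the case $\mathbf V(\pi)\neq V$ has a genuine gap. The blocks $\mathfrak{B}$ of $\Ban^{\mathrm{adm}}_{\G,\zeta}(E)$ are determined by the \emph{mod $p$} semisimplification $\overline{\rho_{\mathfrak B}}$, not by the characteristic-zero Galois representation $\mathbf V(\pi)$. There are uncountably many absolutely irreducible $\pi$ in a single block (one for each lift of $\overline{\rho_{\mathfrak B}}$ to characteristic zero with the right determinant), and all of them have pairwise non-isomorphic $\mathbf V(\pi)$. So ``$\mathbf V(\pi)\neq V$ implies $\pi$ and $\Pi(V)$ lie in different blocks'' is simply false, and the coarse block decomposition \eqref{BlockDecomposition} cannot do the work you ask of it.

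The paper's proof handles this by passing to the finer decomposition \eqref{CRT} of the finite-length category $\Ban^{\mathrm{adm.fl}}_{\G,\zeta}(E)^{\mathfrak B}$ by maximal ideals $\mathfrak{n}$ of the pseudo-deformation ring $R^{\mathrm{ps},\zeta\varepsilon}_{\mathrm{tr}\,\overline{\rho_{\mathfrak B}}}[1/p]$, using diagram \eqref{diag:lift-of-trace} to see that $\pi^*$ and $\Pi(V)^*$ land in the components indexed by $\mathrm{tr}\,\check{\mathbf V}(\pi^*)$ and $\mathrm{tr}\,V$ respectively. When these differ, the vanishing of all $\Ext^i$ is then obtained from \cite[Cor.~6.8]{Paskunas-Tung}; one has to be careful here because the Yoneda $\Ext$ is taken in the ambient admissible category, not just in the finite-length subcategory where the decomposition \eqref{CRT} lives.

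For the case $\mathbf V(\pi)=V$ your outline is essentially the paper's argument: both $\pi^*$ and $\Pi(V)^*$ correspond to the residue field of a regular local ring $A$ of dimension $3$ under an anti-equivalence with finite-length $A$-modules, and the Koszul complex gives $\Ext^i_A(k,k)\cong E^{\binom{3}{i}}$. Two remarks: the paper works directly with the duals $\pi^*,\Pi(V)^*$ rather than undoing the duality (your identification $\Ext^i_\G(\pi^*,\Pi(V)^*)\cong\Ext^i_\G(\Pi(V),\pi)$ is fine but an unnecessary detour); and the ring that arises is the $\mathfrak n$-adic completion of the \emph{pseudo}-deformation ring $R^{\mathrm{ps},\zeta\varepsilon}_{\mathrm{tr}\,\overline{\rho_{\mathfrak B}}}[1/p]$ rather than an integral deformation ring of $\overline V$, and its regularity and dimension $3$ is pulled from \cite[Cor.~2.26]{Paskunas2adic} rather than from a direct $H^2(\mathrm{ad}\,V)=0$ computation (which, incidentally, is not automatic for arbitrary absolutely irreducible $V$).
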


This specific category of duals is also of significant arithmetic interest due to its geometric realization. Recall that Drinfeld spaces form a tower of rigid analytic covers of the $p$-adic upper half-plane $\Omega_{Dr,p} := \sP^{1}_{\Q_p} - \sP^1(\Q_p)$. This tower is defined over $\Breve{\Q}_p$, the completion of the maximal unramified extension of $\Q_p$, and is equipped with actions of $\GL_2(\Q_p)$, the units of the nonsplit quaternion algebra over $\Q_p$, and the Weil group $W_{\Q_p}$. Specifically, when $V$ is a two-dimensional supercuspidal representation of $\G_{\Q_p}$ (that is, the associated Weil-Deligne representation is irreducible), we derive from the work of Colmez, Dospinescu, and Nizio{\l} \cite{CDNJAMS, CDNForum} that the $V$-isotypic part of the $p$-adic \'etale cohomology of the finite level Drinfeld spaces (see Sec. \ref{CDNMain} for details) is isomorphic to a direct sum of copies of the dual $\Pi(V)^*$ (cf. isomorphism \ref{CDNforG}). Consequently, we determine that there is no extension of the $V$-isotypic part of the $p$-adic \'etale cohomology of these finite level Drinfeld spaces by the dual of the $p$-adic local Langlands associated with a reducible local Galois representation. Here, $\cM_{\infty}$ denotes the (non-complete) projective limit of the finite level Drinfeld spaces over $\C_p:=\widehat{\overline{\Q}}_p$ (defined precisely in Section \ref{CDNMain}).

\begin{cor}
\label{corollary ordinary}
Let $\rho:\Gal(\overline{\Q}/\Q) \rightarrow \GL_2(E)$ be a global Galois representation  with the corresponding local representation $\rho_p \cong \begin{pmatrix}
 \eta_1  & \star \\
 0 &  \eta_2\\
 \end{pmatrix} \otimes \eta$ where $\eta_1, \eta_2 :\Q_p^{\times} \rightarrow \mathcal{O}_E^{\times}$ are continuous integral characters and $\eta:\G_{\Q_p} \rightarrow E^{\times}$.  We also assume that $\eta_1 \cdot \eta_2^{-1} \not\in \{\varepsilon^{\pm 1}\}$. Suppose $\rho_p$ is potentially crystalline with distinct Hodge-Tate weights between $(0,k-1)$ for $k \geq 2$. Then, for any absolutely irreducible supercuspidal representation $V$ of $\G_{\Q_p}$ of dimension $\geq 2$  such that its mod $p$ reduction $\overline{V}$ is generic, we have 
\[\Ext_{\G,\zeta}^i(\Pi(\rho_p)^{*}, \HH_{\et}^1(\cM_{\infty}, E(1))[V])=0 \quad \text{for all } i \geq 0.\]
 \end{cor}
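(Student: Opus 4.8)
The plan is to deduce the statement from Theorem~\ref{main thm} by a dévissage, after rewriting the $V$-isotypic cohomology in terms of $\Pi(V)^*$ using the work of Colmez--Dospinescu--Niziol. First, by the isomorphism~\ref{CDNforG} extracted from \cite{CDNJAMS,CDNForum}, the $V$-isotypic component $\HH_{\et}^1(\M_{\infty}, E(1))[V]$ is isomorphic, in the category of duals of admissible Banach representations of $\G$, to a direct sum of finitely many copies of $\Pi(V)^*$ (the index set being a basis of the finite-dimensional Jacquet--Langlands space attached to $V$). As $\Ext_\G^i(X,-)$ commutes with finite direct sums, it suffices to prove
\[
\Ext_\G^i(\Pi(\rho_p)^*,\ \Pi(V)^*) = 0 \qquad \text{for all } i \ge 0.
\]

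Next I would unravel $\Pi(\rho_p)$. Under the hypotheses on $\rho_p$ --- potentially crystalline, distinct Hodge--Tate weights, and $\eta_1\eta_2^{-1}\notin\{\varepsilon^{\pm1}\}$ --- the $p$-adic local Langlands correspondence \cite{Colmez-Montreal} attaches to the \emph{reducible} representation $\rho_p$ an admissible unitary $E$-Banach representation $\Pi(\rho_p)$ of finite length whose Jordan--Hölder constituents $\pi_1,\dots,\pi_r$ are, after enlarging $E$ if necessary, absolutely irreducible unitary Banach representations of $\G$, each a subquotient of a continuous (locally algebraic) principal series. Since Schneider--Teitelbaum duality is an exact contravariant anti-equivalence on admissible Banach representations, a Jordan--Hölder filtration of $\Pi(\rho_p)$ dualises to a finite filtration of $\Pi(\rho_p)^*$ with graded pieces $\pi_1^*,\dots,\pi_r^*$. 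Feeding the short exact sequences of this filtration into the long exact sequences for $\Ext_\G^\bullet(-,\Pi(V)^*)$ --- available in the dual category through the Yoneda construction of Section~\ref{padicLLC} --- and inducting on $r$ reduces the problem to showing $\Ext_\G^i(\pi_j^*,\Pi(V)^*)=0$ for all $i\ge 0$ and all $j$.

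For each $j$ I would then check that $\mathbf{V}(\pi_j)\neq V$, after which Theorem~\ref{main thm}, applied with $\pi=\pi_j$ (and using that $\overline V$ is generic, as assumed), gives exactly $\Ext_\G^i(\pi_j^*,\Pi(V)^*)=0$ for all $i\ge 0$. The inequality $\mathbf{V}(\pi_j)\ne V$ follows from Colmez's computation of the Montr\'eal functor on principal series (equivalently, from exactness of $\mathbf{V}$ applied to a Jordan--Hölder filtration of $\Pi(\rho_p)$ together with $\mathbf{V}(\Pi(\rho_p))\cong\rho_p$): since $\pi_j$ is a subquotient of a principal series, $\mathbf{V}(\pi_j)$ is a subquotient of the reducible two-dimensional $\rho_p$, hence is either at most one-dimensional or isomorphic to $\rho_p^{\mathrm{ss}}$; in both cases it is not isomorphic to the absolutely irreducible two-dimensional $V$. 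Combining the three reductions proves the corollary.

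\textbf{Expected main obstacle.} Granting Theorem~\ref{main thm}, the argument is structurally routine; the work is concentrated at the two interfaces with the literature. First, one must extract the isomorphism~\ref{CDNforG} from \cite{CDNJAMS,CDNForum} in exactly the dual, weak-topology category in which $\Ext_\G^\bullet$ is defined, and with the right (finite) multiplicity, so that commuting $\Ext_\G^i$ with the direct sum is legitimate. Second, one must pin down the Jordan--Hölder constituents of $\Pi(\rho_p)$ for reducible, potentially crystalline $\rho_p$ in the specified range: that $\Pi(\rho_p)$ has finite length, that each constituent is absolutely irreducible (possibly after enlarging $E$) and is a genuine subquotient of a principal series, and that none of them has $\mathbf{V}(\pi_j)\cong V$ --- i.e.\ no constituent of $\Pi(\rho_p)$ behaves like $\Pi(V)$. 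The genericity of $\overline V$ enters only through Theorem~\ref{main thm}.
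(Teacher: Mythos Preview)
Your proposal is correct and follows essentially the same route as the paper: reduce via the isomorphism~\ref{CDNforG} to $\Ext_\G^i(\Pi(\rho_p)^*,\Pi(V)^*)=0$, d\'evisse $\Pi(\rho_p)^*$ along a filtration by duals of principal-series constituents, and apply Theorem~\ref{main thm} to each piece. The only cosmetic difference is that the paper invokes Emerton's explicit two-step description of $\Pi(\rho_p)$ (the short exact sequence $0\to\pi_2^*\to\Pi(\rho_p)^*\to\pi_1^*\to 0$ with $\pi_1,\pi_2$ irreducible principal series, valid precisely because $\eta_1\eta_2^{-1}\notin\{\varepsilon^{\pm1}\}$) rather than a general Jordan--H\"older filtration, and leaves the verification $\mathbf{V}(\pi_j)\neq V$ implicit (it is immediate since $\mathbf{V}$ of a principal series is one-dimensional).
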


    \section{\texorpdfstring{$p$-adic and mod $p$  local Langlands for $\GL_2(\Q_p)$}{p-adic and mod p local Langlands for GL2(Qp)}} \label{padicLLC}

\subsection{The Montr\'eal Functor and the Trianguline Correspondence}
 Following \cite{MR3272011} and \cite{Paskunas13}, we recall some basic facts about the $p$-adic and mod $p$  local Langlands correspondence. As before, we fix a finite extension $E$ of $\Q_p$ with a ring of integers $\Ou$, a uniformizer $\varpi$, and a residue field $\kappa$.

Fix a continuous character $\zeta: Z \rightarrow \mathcal{O}^{\times}$ of the center $Z$ of $\G$. We denote by $\Mod^{\mathrm{sm}}_\G(\mathcal{O})$ the category of smooth $\mathcal{O}$-torsion $\G$-modules. Within this, let $\Mod^{\mathrm{sm}}_{\G,\zeta}(\mathcal{O})$ be the full subcategory consisting of representations on which $Z$ acts via $\zeta$. Furthermore, we define $\Mod^{\mathrm{fin}}_{\G, \zeta}(\mathcal{O})$ and $\Mod^{\mathrm{lfin}}_{\G, \zeta}(\mathcal{O})$ as the full subcategories of representations that are, respectively, of finite length and locally of finite length as $\mathcal{O}[\G]$-modules. A detailed study of these categories can be found in \cite[Section 2]{Ordinary1}.  In his Montr\'eal lectures, Colmez introduced an exact covariant $\Ou$-linear functor $\mathbf{V}: \Mod^{\mathrm{fin}}_{\G, \zeta}(\Ou)\rightarrow 
\Mod_{\G_{\Q_p}}(\Ou)$, where $\Mod_{\G_{\Q_p}}(\Ou)$ is the category of $\Ou$-modules with a continuous $\G_{\Q_p}$-action. Let $\Ban_{\G}^{\mathrm{adm}}(E)$ be the category of unitary admissible $E$-Banach space representations
of $\G$ and let $\Ban_{\G, \zeta}^{\mathrm{adm.fl}}(E)$ be the full subcategory consisting of finite length objects with central character $\zeta$. 
For any $\Pi \in \Ban_{\G}^{\mathrm{adm}}(E)$, one may choose an open bounded $\G$-invariant lattice $\Theta$ in $\Pi$. Then $\Theta/\varpi^n \Theta$ is a smooth admissible representation of $\G$ for all $n\ge 1$, 
implying it is locally finite. This allows one to define 
$\mathbf{V}(\Pi):=E\otimes \underset{\longleftarrow}{\lim}\, \mathbf{V}(\Theta/\varpi^n \Theta)$. Due to the commensurability of all open bounded lattices in $\Pi$, the definition is independent of the specific lattice $\Theta$ chosen. The functor $\mathbf{V}$ satisfies the following properties:
\begin{enumerate}
    \item It is exact and covariant from $\Ban_{\G, \zeta}^{\mathrm{adm.fl}}(E)$ to finite-dimensional continuous $E$-representations of $\G_{\Q_p}$.
    \item The functor $\Pi \mapsto \mathbf{V}(\Pi)$ induces a bijection between the isomorphism classes of absolutely irreducible non-ordinary $\Pi \in \Ban_{\G}^{\mathrm{adm}}(E) $ and two-dimensional absolutely irreducible continuous $E$-representations of $\G_{\Q_p}.$
    \item If $\psi: \Q_p^{\times}\rightarrow \Ou^{\times}$ is a continuous character, then we may also consider it as a continuous character 
$\psi: \G_{\Q_p}\rightarrow \Ou^{\times}$ via the local Artin map, and for all  $\Pi \in \Ban_{\G}^{\mathrm{adm}}(E)$, we have the isomorphism: $$\mathbf{V}(\Pi\otimes \psi\circ \det)\cong \mathbf{V}(\Pi)\otimes \psi.$$
\item The Montr\'eal functor maps ordinary Banach space representations of $\G$ to one-dimensional representations of $\G_{\Q_p}$. More precisely, as a functor from $\Mod^{\mathrm{fin}}_{\G, \zeta}(\Ou)$, $\mathbf{V}$ satisfies the following:
$$\mathbf{V}(\delta \circ \det)=0,\,\, \mathbf{V}(St \otimes \delta \circ \det)=\varepsilon \delta, \,\, \mathbf{V}(\Ind_{P}^{\G}{(\chi_1\otimes \chi_2\varepsilon^{-1})})\cong \chi_2;$$
where $P$ is the Borel subgroup of upper-triangular matrices in $\G$, $\Ind_{P}^{\G}$ denotes the (smooth) parabolic induction, $\chi_1$, $\chi_2$ and $\delta$ are continuous characters of $\Q_p^\times$, and $St$ is the Steinberg representation.
\end{enumerate}

Let $H \subset \G$ be a compact open subgroup, and let $\Ou[[H]]$ represent its completed group algebra. We define $\Mod^{\mathrm{pro \, aug}}_{\G}(\Ou)$ as the category of profinite, linearly topological $\Ou[[H]]$-modules $M$ equipped with a continuous $\G$-action, such that the action of $\Ou[H]$ induced by $\Ou[[H]]$ aligns with the restriction of the $\G$-action. Morphisms in this category are continuous $\G$-equivariant homomorphisms of topological $\Ou[[H]]$-modules. This definition is well-defined regardless of the choice of $H$, as any two compact open subgroups of $\G$ are commensurable. As noted in \cite[Equation 2.2.8]{Ordinary1}, taking Pontryagin dual yields an anti-equivalence between $\Mod^{\mathrm{sm}}_\G(\Ou)$ and $\Mod^{\mathrm{pro \, aug}}_\G(\Ou)$. The Pontryagin dual of an $\Ou$-module $M$ is defined as $$M^{\vee}:=\Hom^{cont}_{\Ou}(M, E/\Ou),$$ where $E/\Ou$ carries the discrete topology and $M^{\vee}$ the compact-open topology. There exists a canonical isomorphism $M^{\vee \vee}\cong M$.

Let $\mathfrak{C}(\Ou)$ be the full subcategory of $\Mod^{\mathrm{pro \, aug}}_\G(\Ou)$ whose objects consist of all $M$ isomorphic to $\pi^{\vee}$ for some $\pi \in \Mod^{\mathrm{lfin}}_{\G, \zeta}(\Ou)$. This category $\mathfrak{C}(\Ou)$ is abelian and has exact projective limits and projective envelopes. Let $\mathrm{Rep}_{\G_{\Q_p}}(\Ou)$ denote the category of continuous $\G_{\Q_p}$-representations on compact $\Ou$-modules. Pa{\v s}k\={u}nas \cite{Paskunas13} introduced a functor $\check{\mathbf{V}}: \mathfrak{C}(\Ou)\rightarrow \mathrm{Rep}_{\G_{\Q_p}}(\Ou)$ defined as follows: for an object $M \in \mathfrak{C}(\Ou)$ of finite length, set $\check{\mathbf{V}}(M):=\mathbf{V}(M^{\vee})^{\vee}(\varepsilon \zeta)$, where $\vee$ denotes the Pontryagin dual and $\zeta$ is treated as a character of $\G_{\Q_p}$ via local class field theory. More generally, by writing $M\cong \underset{\longleftarrow}{\lim}\, M_i$ as a limit over finite-length quotients in $\mathfrak{C}(\Ou)$, one defines $\check{\mathbf{V}}(M):=\underset{\longleftarrow}{\lim}\, \check{\mathbf{V}}(M_i)$. By construction, this functor is also covariant and exact. Under this normalization of $\check{\mathbf{V}}$, the following relations hold:
\begin{equation} \label{check-V}
     \check{\mathbf{V}}(\pi^{\vee})\cong \mathbf{V}(\pi), \quad \check{\mathbf{V}}((\Ind_{P}^{\G}{\chi_1\otimes \chi_2 \omega^{-1}})^{\vee})= \chi_1, \quad 
\check{\mathbf{V}}((St \otimes \eta\circ \det)^{\vee})=\eta,
\end{equation}
where $\pi$ represents a supersingular representation and $\eta$ is a continuous character of $\Q_p^\times$.

Consider $\Pi \in \Ban_{\G, \zeta}^\mathrm{adm}(E)$ with an open, bounded $\G$-invariant lattice $\Theta$. Its Schikhof dual is defined as $ \Theta^d:=\Hom_{\Ou}(\Theta, \Ou)$, endowed with the topology of pointwise convergence. It is the unit ball for the dual representation $\Pi^*$. The relationship between the Schikhof and Pontryagin duals is given by 
$$\Theta^d \otimes_{\Ou} \Ou/\varpi^n \Ou\cong (\Theta/\varpi^n \Theta)^{\vee}.
$$

According to \cite[Lemma 4.11]{Paskunas13}, $\Theta^d$ belongs to $\mathfrak{C}(\Ou)$. As $\Theta^d$ is $\Ou$-torsion free and $\check{\mathbf{V}}$ preserves exactness and covariance, $\check{\mathbf{V}}(\Theta^d)$ remains $\Ou$-torsion free. In this context, we define $\check{\mathbf{V}}(\Pi)$ as $\check{\mathbf{V}}(\Theta^d)\otimes_{\Ou} E$. The commensurability of different open lattices in $\Pi$ ensures that $\check{\mathbf{V}}(\Pi)$ is independent of the choice of $\Theta$.

\begin{lemma} \label{self-duality-of-LLC} 
Assume $\Pi$ and $\Theta$ are defined as above. Then $\check{\mathbf{V}}(\Pi)\cong \mathbf{V}(\Pi)^* (\varepsilon \zeta)$, where $*$ denotes the $E$-linear dual. Furthermore, if $\check{\mathbf{V}}(\Pi)$ is $2$-dimensional with determinant $\varepsilon \zeta$, then $\check{\mathbf{V}}(\Pi)\cong \mathbf{V}(\Pi)$.
\end{lemma}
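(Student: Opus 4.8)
The plan is to unwind the definitions of $\check{\mathbf{V}}$ and $\mathbf{V}$ at finite level, establish the dual identity modulo $\varpi^n$, and then pass to the limit. First I would recall that for an object $M \in \mathfrak{C}(\Ou)$ of finite length one has, by Pa\v{s}k\=unas's normalization, $\check{\mathbf{V}}(M) = \mathbf{V}(M^\vee)^\vee(\varepsilon\zeta)$. Apply this to $M = \Theta^d/\varpi^n\Theta^d$. Using the compatibility $\Theta^d \otimes_\Ou \Ou/\varpi^n \cong (\Theta/\varpi^n\Theta)^\vee$ recorded in the excerpt, we get $(\Theta^d/\varpi^n)^\vee \cong (\Theta/\varpi^n\Theta)^{\vee\vee} \cong \Theta/\varpi^n\Theta$, so that $\check{\mathbf{V}}(\Theta^d/\varpi^n) \cong \mathbf{V}(\Theta/\varpi^n\Theta)^\vee(\varepsilon\zeta)$. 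Now take the inverse limit over $n$: the left side is $\check{\mathbf{V}}(\Theta^d)$ by definition, and since $\mathbf{V}(\Pi) = E \otimes \varprojlim_n \mathbf{V}(\Theta/\varpi^n\Theta)$ and Pontryagin duality of a cofinal system of finite $\Ou$-modules turns $\varprojlim$ into (the $\Ou$-lattice underlying) the $E$-linear dual after tensoring with $E$, one obtains $\check{\mathbf{V}}(\Theta^d) \otimes_\Ou E \cong \mathbf{V}(\Pi)^*(\varepsilon\zeta)$, i.e.\ $\check{\mathbf{V}}(\Pi) \cong \mathbf{V}(\Pi)^*(\varepsilon\zeta)$.

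For the second assertion, suppose $W := \check{\mathbf{V}}(\Pi)$ is $2$-dimensional with $\det W = \varepsilon\zeta$. For any $2$-dimensional representation $W$ of $\G_{\Q_p}$ one has the standard isomorphism $W^* \cong W \otimes (\det W)^{-1}$, coming from the fact that the pairing $W \times W \to \wedge^2 W = \det W$ is perfect and alternating. Combining this with the first part, $\mathbf{V}(\Pi) = \check{\mathbf{V}}(\Pi)(\varepsilon\zeta)^{-1}$ wait—rather, rearrange $\check{\mathbf{V}}(\Pi) \cong \mathbf{V}(\Pi)^*(\varepsilon\zeta)$ to get $\mathbf{V}(\Pi) \cong \check{\mathbf{V}}(\Pi)^*(\varepsilon\zeta)^{-1} = W^*(\varepsilon\zeta)^{-1} \cong W \otimes (\det W)^{-1}(\varepsilon\zeta)^{-1} = W \otimes (\varepsilon\zeta)^{-1}(\varepsilon\zeta)^{-1}$. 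This is not quite $W$ unless one is careful; the point is that $\mathbf{V}(\Pi)$ and $\check{\mathbf{V}}(\Pi)$ themselves both have determinant $\varepsilon\zeta$ up to the twist, so I would instead argue: $\mathbf{V}(\Pi)^* \cong \mathbf{V}(\Pi) \otimes (\det \mathbf{V}(\Pi))^{-1}$, and since $\det\mathbf{V}(\Pi) = \varepsilon^{-1}\zeta$ (by property (3) of $\mathbf{V}$ and the relation $\mathbf{V}(\Pi(V)) = V$ with $\det V = \varepsilon^{-1}\zeta$ in the normalization used), one gets $\mathbf{V}(\Pi)^*(\varepsilon\zeta) \cong \mathbf{V}(\Pi) \otimes (\varepsilon^{-1}\zeta)^{-1}(\varepsilon\zeta) = \mathbf{V}(\Pi)\otimes \varepsilon^2$—so the cleanest route is to simply compute both determinants and match the twists directly, concluding $\check{\mathbf{V}}(\Pi) \cong \mathbf{V}(\Pi)$ exactly when the determinant hypothesis forces the auxiliary twist to be trivial.

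The main obstacle I expect is bookkeeping the normalizations: keeping straight whether $\det$ of the relevant Galois representation is $\varepsilon\zeta$, $\varepsilon^{-1}\zeta$, or $\zeta$, and ensuring the twist by $\varepsilon\zeta$ in the definition of $\check{\mathbf{V}}$ interacts correctly with the self-duality $W^* \cong W \otimes (\det W)^{-1}$. Everything else is formal: exactness of Pontryagin and Schikhof duality on the relevant categories (already cited from \cite{Paskunas13, Ordinary1}), commutation of $\check{\mathbf{V}}$ with inverse limits over finite-length quotients (built into its definition), and the elementary fact that tensoring with $E$ converts the profinite dual of a torsion-free $\Ou$-lattice into the $E$-linear dual. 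I would present the finite-level identity as the key computation and then state the limiting and twist-matching steps as short consequences.
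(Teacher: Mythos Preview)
The paper does not actually prove this lemma; it simply cites \cite[Lemma~5.49]{Paskunas13}. Your first paragraph is essentially a reconstruction of Pa\v{s}k\=unas's argument there, and it is correct: apply the finite-length definition of $\check{\mathbf{V}}$ to $\Theta^d/\varpi^n$, use the Schikhof--Pontryagin compatibility to identify $(\Theta^d/\varpi^n)^\vee$ with $\Theta/\varpi^n$, and pass to the limit. The only point worth stating more carefully is that $\varprojlim_n \mathbf{V}(\Theta/\varpi^n)^\vee$ is the $\Ou$-dual of the finitely generated $\Ou$-module $\varprojlim_n \mathbf{V}(\Theta/\varpi^n)$, so tensoring with $E$ gives the $E$-linear dual $\mathbf{V}(\Pi)^*$; this is elementary but should be said once rather than gestured at.

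Your second paragraph, however, does not land. You correctly invoke $W^* \cong W \otimes (\det W)^{-1}$ for a $2$-dimensional $W$, but then the twist bookkeeping goes wrong and you never recover. The clean computation is one line: from the first part, $\mathbf{V}(\Pi) \cong \bigl(\check{\mathbf{V}}(\Pi)(\varepsilon\zeta)^{-1}\bigr)^* = \check{\mathbf{V}}(\Pi)^*(\varepsilon\zeta)$; now if $W := \check{\mathbf{V}}(\Pi)$ has $\det W = \varepsilon\zeta$, then $W^* \cong W(\varepsilon\zeta)^{-1}$, so $\mathbf{V}(\Pi) \cong W(\varepsilon\zeta)^{-1}(\varepsilon\zeta) = W$. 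There is no need to know $\det \mathbf{V}(\Pi)$ separately, and your attempt to compute it (via ``property (3)'' and the relation $\det V = \varepsilon^{-1}\zeta$) introduces a normalization you have not established and which is in fact not the one in force here. Drop that detour and the argument is complete.
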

\begin{proof}
    This is proved in \cite[Lemma 5.49]{Paskunas13}.
\end{proof}

The surjectivity of the functor $\mathbf{V}$ is established in \cite{Colmez-Montreal} for odd $p$ (and in \cite{MR3384443} for $p=2$) via the construction of a representation $\Pi(V)$ of $\G$ satisfying ${\bf V}(\Pi(V))=V$ for any $2$-dimensional representation $V$ of $\G_{\Q_p}$. The construction goes through Fontaine's equivalence \cite{MR1106901} relating representations of $\G_{\Q_p}$ and $(\varphi,\Gamma)$-modules, similar to the construction of $\Pi\mapsto {\bf V}(\Pi)$. The resulting $p$-adic local Langlands correspondence $V \mapsto \Pi(V)$ exhibits several notable features:
\begin{itemize}
  \item compatibility with reduction modulo $p$,
  \item consistent behavior within families,
  \item compatibility with the classical local Langlands correspondence.
\end{itemize}

An explicit construction of $\Pi(V)$ for trianguline $V$ is given in \cite[Section 6]{Emerton06}. We recall it here. Note that Emerton's description in the Banach space setting is the same as the characteristic zero translation of Colmez's mod $p$ results \cite[Th\'eor\`eme 0.11]{Colmez-Montreal} via the extension of the Montr\'eal functor to the Banach space setting described above. Throughout this construction, the superscript $\mathcal{C}^0$ indicates that the induction is taken in the category of continuous functions.

\begin{enumerate} \label{Emertonmain}
\item 
(Absolutely reducible)
Let $V \cong \begin{pmatrix}
 \eta_1 & 0  \\
 0 & \eta_2 \\
 \end{pmatrix} \otimes \eta$ with $ \eta_1,  \eta_2$ continuous {\it integral} characters and $\eta: \G_{\Q_p} \rightarrow E^{\times}$ be a continuous character. In this case, 
 \begin{itemize}
     \item If $\eta_1 \eta_2^{-1}\neq \varepsilon ^{\pm 1}$, then
     \begin{equation*}
         \Pi(V) \cong  \Ind^{\G}_{P}(\eta_1 \otimes \eta_2 \varepsilon^{-1})^{\mathcal{C}^0} \otimes \eta \bigoplus  \Ind^{\G}_ {P}(\eta_2 \otimes \eta_1 \varepsilon^{-1})^{\mathcal{C}^0} \otimes \eta. 
 \end{equation*} 
 \item If $\eta_1 \eta_2^{-1}= \varepsilon$ then
\[
 \Pi(V) \cong \eta_1 \circ \det \otimes B(2, \infty) \oplus \Ind^{\G}_ {P} (\eta_1 \varepsilon^{-1} \otimes \eta_1 \varepsilon)^{\mathcal{C}^0} \otimes \eta;
 \]
 where $B(2, \infty)$ denotes the universal unitary completion of the smooth induction
 \ $\Ind_{P}^{\G}(|\,|_{p}^{-1}\otimes |\,|_{p})^{sm}. $
 \end{itemize}

 \item 
 (Reducible non-split, case I)

 Let  $V \cong \begin{pmatrix}
 \eta_1 & \star \\
 0 & \eta_2 \\
 \end{pmatrix} \otimes \eta$  with $ \eta_1,  \eta_2 $ as above, $\star \neq 0$ and $\eta_1 \cdot \eta_2^{-1}\neq \varepsilon^{\pm 1}$, then $ \Pi(V)$ sits inside the following exact sequence in the category $\Ban_{\G}^{\mathrm{adm}}(E)$:
 \begin{equation*}
 \label{colmeznew}
 0 \rightarrow \pi_1 \otimes \eta  \rightarrow \Pi(V) \rightarrow \pi_2 \otimes \eta \rightarrow 0;
 \end{equation*}
 with  $\pi_1:=\Ind^{\G}_{ P}(\eta_2 \otimes \eta_1)^{\mathcal{C}^0}$ and $\pi_2:=\Ind^{\G}_{ P}(\eta_1 \varepsilon \otimes \eta_2 \varepsilon^{-1})^{\mathcal{C}^0}$.
 \item
(Reducible non-split, case II)
 Suppose  $V \cong \begin{pmatrix}
 \eta & \star \\
 0 & \eta \varepsilon ^{-1}\\
 \end{pmatrix} $ or $V \cong \begin{pmatrix}
 \eta \varepsilon ^{-1} & \star \\
 0 & \eta \\
 \end{pmatrix}$  with $\eta$ as above and  $\star \neq 0$. Then the corresponding $\G$-representation $\Pi(V)$ has a Jordan-H\"older filtration with Jordan-H\"older factors $\widehat{St}$, $\underline{1}$, and $\Ind_{P}^{\G} (\eta \varepsilon^{-1} \otimes \eta \varepsilon )^{\mathcal{C}^0}$ where $\widehat{St}$ denotes the universal unitary completion of the Steinberg representation $St$.

\end{enumerate}

\subsection{Blocks and semisimple mod $p$ LLC}

In \cite{Paskunas13}, Pa{\v s}k\={u}nas defined an equivalence relation on the set of (isomorphism classes of) irreducible objects of $\Mod^{\mathrm{lfin}}_{\G,\zeta}(\Ou)$, where $\tau \sim \pi$ if and 
only if there exists a sequence of irreducible representations $\tau=\tau_0, \tau_1, \ldots, \tau_n=\pi$ such that for each $i$, one of the following holds: $1)\,\tau_i=\tau_{i+1}$; 
$2)\,\Ext^1_\G(\tau_i, \tau_{i+1})\neq 0$; $3)\,\Ext^1_\G(\tau_{i+1}, \tau_i)\neq 0$. An equivalence class is called a block. The category $\Mod^{\mathrm{lfin}}_{\G, \zeta}(\Ou)$ is locally finite, and by general results of Gabriel \cite{MR232821}, it decomposes as a product 
$$\Mod^{\mathrm{lfin}}_{\G, \zeta}(\Ou)\cong \prod_{\mathfrak{B}} \Mod^{\mathrm{lfin}}_{\G, \zeta}(\Ou)^{\mathfrak{B}}$$ 
of indecomposable subcategories, where the product runs over the set of all blocks $\mathfrak{B}$ of $\Mod^{\mathrm{lfin}}_{\G,\zeta}(\Ou)$. Here, $\Mod^{\mathrm{lfin}}_{\G,\zeta}(\Ou)^{\mathfrak{B}}$ denotes the full subcategory 
of $\Mod^{\mathrm{lfin}}_{\G,\zeta}(\Ou)$ consisting of those representations such that all their irreducible subquotients lie in $\mathfrak{B}$. Dually, we obtain a decomposition:
\begin{equation*} 
\mathfrak{C}(\Ou)\cong \prod_{\mathfrak{B}} \mathfrak{C}(\Ou)^{\mathfrak{B}},
\end{equation*}
where $M\in \mathfrak{C}(\Ou)$ lies in $\mathfrak{C}(\Ou)^{\mathfrak{B}}$ if and only if $M^{\vee}$ lies in $\Mod^{\mathrm{lfin}}_{\G, \zeta}(\Ou)^{\mathfrak{B}}$. 

For each fixed block $\Mod^{\mathrm{lfin}}_{\G, \zeta}(\Ou)^{\mathfrak{B}}$, there is a finite extension $E'$ of $E$ with ring of integers $\Ou'$, such that $\Mod^{\mathrm{lfin}}_{\G, \zeta}(\Ou)^{\mathfrak{B}}\otimes_\Ou \Ou'$ decomposes into a finite product of indecomposable subcategories, each of which remains indecomposable after a further extension of scalars. Such absolutely indecomposable blocks have been classified in \cite{paskunasblocks}, which are described as follows:
\begin{enumerate}
   \item $\mathfrak{B}=\{\pi\}$, with $\pi$ supersingular,
   \item $\mathfrak{B}= \{\Ind_P^\G \chi_1 \otimes \chi_2 \omega^{-1}, \Ind_P^\G \chi_2 \otimes \chi_1 \omega^{-1} \}$, with $\chi_1 \chi_2 ^{-1} \notin \{1, \omega^{\pm 1}\}$,
   \item $\mathfrak{B}=\{ \Ind_P^\G \chi \otimes \chi \omega^{-1}\}$, with $p \geq 3$,
   \item $\mathfrak{B}=\{1, St\} \otimes \chi \circ \det$, with $p=2$,
   \item $\mathfrak{B}=\{1, St, \Ind_P^\G \omega \otimes \omega^{-1}\} \otimes \chi \circ \det$, with $p\geq 5$,
   \item $\mathfrak{B}=\{1, St, \omega \circ \det, St \otimes \omega \circ \det \} \otimes \chi \circ \det$, with $p=3$;
\end{enumerate}
where $\chi_1$, $\chi_2$ and $\chi$ are smooth characters of $\Q_p^\times$ and $\omega$ is the mod $p$ cyclotomic character given by $x \mapsto x|x|\pmod p$.

To each block above, one may attach a semi-simple $2$-dimensional $\kappa$-representation $\overline{\rho_{\mathfrak{B}}}$ of $\G_{\Q_p}$ using
the semi-simple mod $p$ correspondence of Breuil (\cite{Breuil03-1}, \cite{Breuil03-2}): in the case (1) 
$\overline{\rho_{\mathfrak{B}}}$ is absolutely irreducible, and  Colmez's functor $\mathbf{V}$ maps $\pi$ to $\overline{\rho_{\mathfrak{B}}}$, 
in the case (2) $\overline{\rho_{\mathfrak{B}}}=\chi_1\oplus \chi_2$, in cases (3) and (4) $\overline{\rho_{\mathfrak{B}}}= \chi\oplus \chi$, in cases (5) and (6) $\overline{\rho_{\mathfrak{B}}}=\chi\oplus 
\chi\omega$, where we consider characters of $\G_{\Q_p}$ as characters of $\Q_p^{\times}$ via local class field theory, normalized so that 
uniformizers correspond to geometric Frobenii. We note that the determinant of $\overline{\rho_{\mathfrak{B}}}$ is equal to $\zeta \varepsilon$ modulo $\varpi$, where $\varepsilon$ is the $p$-adic cyclotomic character.

\section{Vanishing Results for \texorpdfstring{$\Ext^{\bullet}$}{Ext^.} Groups}
\label{Jacquet}
We keep the notation from the previous sections: $E$ is a finite extension of $\mathbb{Q}_p$ with ring of integers $\Ou$, uniformizer $\varpi$, and residue field $\kappa$.
\subsection{Two dimensional pseudocharacters}\label{pseudocharacters}
We begin by reviewing the definition and some properties of $2$-dimensional pseudocharacters. Since these facts are standard, we quote much of this and the following paragraph directly from  \cite[Appendix A]{Paskunas13}. Let $\mathcal{G}$ denote a profinite group, and let $(A, \mathfrak{m}_A)$ be a local artinian $\Ou$-algebra. Throughout, we assume that $p>2$. A continuous map $T: \mathcal{G} \rightarrow A$ is defined as a $2$-dimensional $A$-valued pseudocharacter if it adheres to the following conditions:
\begin{enumerate}
    \item $T(1)=2$;
    \item $T(gh)=T(hg)$ for every $g,h \in \mathcal{G}$;
    \item The relation $T(g)T(h)T(k)-T(g) T(hk)- T(h) T(gk)- T(k) T(gh)+T(ghk)+T(gk h)=0$ holds for all $g,h, k \in \mathcal{G}$.
\end{enumerate}

It is a well-known result that for any continuous representation $\rho: \mathcal{G} \rightarrow \GL_2(A)$, the associated trace function $\mathrm{tr} \rho$ constitutes a $2$-dimensional pseudocharacter. Given a $2$-dimensional pseudocharacter $T: \mathcal{G}\rightarrow A$, one can define a continuous group homomorphism $\mathcal{G} \rightarrow A^{\times}$ via the formula $D(g):=\frac{T(g)^2-T(g^2)}{2}$ (see \cite[Proof of Prop. 7.23] {MR3444227}). As shown in \cite[Lemma 7.7, Prop. 7.23]{MR3444227}, the mapping $T\mapsto (T, D)$ establishes a bijection between the set of $2$-dimensional pseudocharacters and the set of pairs $(T, D)$, where $D:\mathcal{G} \rightarrow A^{\times}$ is a continuous group homomorphism and $T:\mathcal{G}\rightarrow A$ is a continuous function which satisfies: $T(1)=2$, $T(gh)=T(hg)$, and the identity $D(g)T(g^{-1}h)-T(g)T(h)+T(gh)=0$ for all $g, h\in \mathcal{G}$.

Consider a continuous representation $\rho:\mathcal{G}\rightarrow \GL_2(\kappa)$. Let $D_{\mathrm{tr} \rho}^{\mathrm{ps}}$ denote the functor from local artinian augmented $\Ou$-algebras with residue field $\kappa$ to the category of sets, where $D_{\mathrm{tr} \rho}^{\mathrm{ps}}(A)$ consists of all $2$-dimensional $A$-valued pseudocharacters $T$ lifting $\mathrm{tr}\rho$, that is satisfying the condition $T\equiv \mathrm{tr} \rho \pmod{\mathfrak{m}_A}$. Provided that $\Hom^{cont}(\mathcal{H}, \F_p)$ is a finite-dimensional $\F_p$-vector space for every open subgroup $\mathcal{H}$ of $\mathcal{G}$, this functor $D_{\mathrm{tr} \rho}^{\mathrm{ps}}$ is pro-represented by a complete local noetherian $\Ou$-algebra. Notably, this finiteness criterion is met when $\mathcal{G}$ is the absolute Galois group of a local field.

We usually work with a modified version where the determinant is fixed. Let $\psi: \mathcal{G}\rightarrow \Ou^{\times}$ be a continuous character that lifts $\det \rho$. We define $D_{\mathrm{tr} \rho}^{\mathrm{ps}, \psi}$ as the subfunctor of $D_{\mathrm{tr} \rho}^{\mathrm{ps}}$ where $T\in D_{\mathrm{tr} \rho}^{\mathrm{ps}, \psi}(A)$ if and only if $\frac{T(g)^2-T(g^2)}{2}$ coincides with (the image of) $\psi(g)$ for all $g\in \mathcal{G}$. We refer to $D_{\mathrm{tr} \rho}^{\mathrm{ps}, \psi}$ as a deformation problem with a fixed determinant. If $R_{\mathrm{tr} \rho}^{\mathrm{ps}}$ pro-represents $D_{\mathrm{tr} \rho}^{\mathrm{ps}}$, it follows that $D_{\mathrm{tr} \rho}^{\mathrm{ps}, \psi}$ is pro-represented by a quotient of this ring, which we denote by $R_{\mathrm{tr} \rho}^{\mathrm{ps}, \psi}$.

Recall that for the identity functor $I_\mathcal{C}$
of any category $\mathcal{C}$, the natural transformations $\eta: I_\mathcal{C} \rightarrow I_\mathcal{C}$
form a commutative monoid, called the center of the category $\mathcal{C}$. The following theorem characterizes the center of the category $\Mod^{\mathrm{lfin}}_{\G, \psi}(\Ou)^\mathfrak{B}$, as established in \cite[Theorem 1.3]{Paskunas2adic} (refer also to \cite{Paskunas13}).

\begin{theorem}\label{intro_centre}
Suppose the block $\mathfrak{B}$ is given by (1) or (2) above. There exists a natural isomorphism between the center of the category $\Mod^{\mathrm{lfin}}_{\G, \psi}(\Ou)^\mathfrak{B}$ and the ring $R_{\mathrm{tr}\overline{\rho_\mathfrak{B}}}^{\mathrm{ps}, \psi}$.
\end{theorem}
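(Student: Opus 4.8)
The plan is to translate the statement, via Pontryagin duality and the existence of a projective generator, into the computation of the center of an explicit $\Ou$-algebra $E_{\mathfrak{B}}$, and then to identify that center with the pseudodeformation ring $R_{\mathrm{tr}\overline{\rho_\mathfrak{B}}}^{\mathrm{ps},\psi}$ by means of Colmez's functor $\check{\mathbf{V}}$. First I would reduce to the case that $\mathfrak{B}$ is absolutely indecomposable: as recalled above, after a finite faithfully flat extension $\Ou\to\Ou'$ (which only enlarges the residue field) the block $\mathfrak{B}$ breaks into absolutely indecomposable pieces, and both the center of $\Mod^{\mathrm{lfin}}_{\G,\psi}(\Ou)^{\mathfrak{B}}$ and the ring $R_{\mathrm{tr}\overline{\rho_\mathfrak{B}}}^{\mathrm{ps},\psi}$ are compatible with this base change, so it suffices to treat the two blocks (1) and (2) of the list above. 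Next, Pontryagin duality gives an anti-equivalence between $\Mod^{\mathrm{lfin}}_{\G,\psi}(\Ou)^{\mathfrak{B}}$ and $\mathfrak{C}(\Ou)^{\mathfrak{B}}$; since the center of a category is the commutative monoid of natural transformations of the identity functor, it coincides for a category and its opposite, and it is therefore enough to compute the center of $\mathfrak{C}(\Ou)^{\mathfrak{B}}$.

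Now I would exploit that $\mathfrak{C}(\Ou)^{\mathfrak{B}}$ has a projective generator. Let $S_1,\dots,S_r$ be the finitely many irreducible objects of $\mathfrak{C}(\Ou)^{\mathfrak{B}}$ (so $r=1$ in case (1) and $r=2$ in case (2)), let $\widetilde{P}_j$ be a projective envelope of $S_j$ in $\mathfrak{C}(\Ou)$, set $\widetilde{P}:=\bigoplus_{j=1}^{r}\widetilde{P}_j$ and $E_{\mathfrak{B}}:=\End_{\mathfrak{C}(\Ou)}(\widetilde{P})$. Then $M\mapsto\Hom_{\mathfrak{C}(\Ou)}(\widetilde{P},M)$ is an equivalence between $\mathfrak{C}(\Ou)^{\mathfrak{B}}$ and the category of pseudocompact right $E_{\mathfrak{B}}$-modules, and under this equivalence the center of the category corresponds to the center $Z(E_{\mathfrak{B}})$ of the ring. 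Thus the theorem is reduced to producing a natural isomorphism of $\Ou$-algebras $Z(E_{\mathfrak{B}})\cong R_{\mathrm{tr}\overline{\rho_\mathfrak{B}}}^{\mathrm{ps},\psi}$.

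To build such a map I would apply $\check{\mathbf{V}}$ to $\widetilde{P}$. Functoriality makes $\check{\mathbf{V}}(\widetilde{P})$ a continuous representation of $\G_{\Q_p}$ over $E_{\mathfrak{B}}$ lifting $\overline{\rho_\mathfrak{B}}$, where one uses the explicit values of $\check{\mathbf{V}}$ in \eqref{check-V} and the description of $\overline{\rho_\mathfrak{B}}$ attached to the block. In case (1), $E_{\mathfrak{B}}$ is commutative and $\check{\mathbf{V}}(\widetilde{P})$ is free of rank two, giving a deformation $\rho:\G_{\Q_p}\to\GL_2(E_{\mathfrak{B}})$ of $\overline{\rho_\mathfrak{B}}$ with determinant $\psi$ (the normalization being the one in Lemma~\ref{self-duality-of-LLC}); in case (2), the pair $(E_{\mathfrak{B}},\check{\mathbf{V}}(\widetilde{P}))$ is a Cayley--Hamilton pseudorepresentation deforming $\mathrm{tr}\,\overline{\rho_\mathfrak{B}}$ with fixed determinant $\psi$. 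In either case the associated $2$-dimensional pseudocharacter $T=\mathrm{tr}\,\check{\mathbf{V}}(\widetilde{P}):\G_{\Q_p}\to E_{\mathfrak{B}}$ takes values in $Z(E_{\mathfrak{B}})$ and reduces to $\mathrm{tr}\,\overline{\rho_\mathfrak{B}}$, so by the universal property it induces a local $\Ou$-algebra homomorphism $R_{\mathrm{tr}\overline{\rho_\mathfrak{B}}}^{\mathrm{ps},\psi}\to Z(E_{\mathfrak{B}})$.

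Finally I would show this map is an isomorphism, and this is where the work lies. In case (1), $\overline{\rho_\mathfrak{B}}$ is absolutely irreducible, so $R_{\mathrm{tr}\overline{\rho_\mathfrak{B}}}^{\mathrm{ps},\psi}$ agrees with the (commutative) universal deformation ring $R_{\overline{\rho_\mathfrak{B}}}^{\psi}$; one checks that $\widetilde{P}$ carries the universal deformation, whence $E_{\mathfrak{B}}\cong R_{\overline{\rho_\mathfrak{B}}}^{\psi}$ is already commutative and equals its own center, and we are done. In case (2) the genericity hypothesis $\chi_1\chi_2^{-1}\notin\{1,\omega^{\pm1}\}$ forces the relevant $\Ext^1$-spaces between $S_1$ and $S_2$ and the self-extension spaces in $\mathfrak{C}(\Ou)^{\mathfrak{B}}$ to have the expected minimal dimensions, which allows one to present $E_{\mathfrak{B}}$ explicitly as a generalized matrix algebra over $R_{\mathrm{tr}\overline{\rho_\mathfrak{B}}}^{\mathrm{ps},\psi}$ with rank-one off-diagonal components; surjectivity of the map then amounts to $E_{\mathfrak{B}}$ being generated over the image by the idempotents and the off-diagonal generators read off from $\check{\mathbf{V}}(\widetilde{P})$, while injectivity follows from the topological flatness of $\check{\mathbf{V}}(\widetilde{P})$ over $R_{\mathrm{tr}\overline{\rho_\mathfrak{B}}}^{\mathrm{ps},\psi}$ together with a comparison of relative tangent spaces. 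The hard part is precisely this analysis of $E_{\mathfrak{B}}$ in case (2): pinning down its algebra structure is exactly where the genericity of $\overline{\rho_\mathfrak{B}}$ is essential and where one must appeal to Pa\v{s}k\={u}nas's detailed study of this block.
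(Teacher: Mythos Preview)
The paper does not supply its own proof of this theorem: it is stated as a quotation of \cite[Theorem~1.3]{Paskunas2adic} (with a reference to \cite{Paskunas13}), and the authors use it as a black box. So there is no in-paper argument to compare against.

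That said, your sketch is a faithful outline of the strategy in the cited references. The reduction to $\mathfrak{C}(\Ou)^{\mathfrak{B}}$ via Pontryagin duality, the identification of the center with $Z(E_{\mathfrak{B}})$ for $E_{\mathfrak{B}}=\End_{\mathfrak{C}(\Ou)}(\widetilde{P})$ where $\widetilde{P}$ is a projective generator, and the construction of the map $R_{\mathrm{tr}\overline{\rho_\mathfrak{B}}}^{\mathrm{ps},\psi}\to Z(E_{\mathfrak{B}})$ by pushing $\widetilde{P}$ through $\check{\mathbf{V}}$ are exactly what Pa\v{s}k\={u}nas does. Your case split is also the right one: in case~(1) the endomorphism ring is already commutative and coincides with the universal deformation ring of the irreducible $\overline{\rho_\mathfrak{B}}$, while in case~(2) one must analyze $E_{\mathfrak{B}}$ as a generalized matrix algebra and the genericity hypothesis $\chi_1\chi_2^{-1}\notin\{1,\omega^{\pm1}\}$ is what controls the off-diagonal pieces. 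The only caveat is that the final paragraph compresses a substantial amount of work (the explicit presentation of $E_{\mathfrak{B}}$, the flatness assertions, and the tangent-space comparison), but you flag this honestly as ``where the work lies'' and correctly point to Pa\v{s}k\={u}nas's detailed block analysis for the missing input.
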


\subsection{Block decomposition in the Banach setting}

In the context of Banach space representation, for a block $\mathfrak{B}$, let $\Ban^{\mathrm{adm}}_{\G, \psi}(E)^\mathfrak{B}$ be the full subcategory of $\Ban^{\mathrm{adm}}_{\G, \psi}(E)$ consisting of those $\Pi$, such that for some (equivalently any) open bounded $\G$-invariant lattice $\Theta$ in $\Pi$, all the irreducible subquotients of $\Theta\otimes_{\Ou} \kappa$, which is an object of $\Mod^{\mathrm{lfin}}_{\G, \psi}(\Ou)$,  lie in $\mathfrak{B}$. It is shown in \cite[Prop. 5.36]{Paskunas13} that $\Ban^{\mathrm{adm}}_{\G, \psi}(E)$ decomposes into a direct sum of subcategories:
\begin{equation} \label{BlockDecomposition} 
\Ban^{\mathrm{adm}}_{\G, \psi}(E)\cong \bigoplus_{\mathfrak{B}} \Ban^{\mathrm{adm}}_{\G, \psi}(E)^\mathfrak{B},
\end{equation}
where the sum runs over the set of all blocks $\mathfrak{B}$ of $\Mod^{\mathrm{lfin}}_{\G,\psi}(\Ou)$. 

To understand the finer structure of these blocks, we first make explicit the action of the generic fiber of the relevant pseudodeformation ring. For any $\Pi \in \Ban^{\mathrm{adm}}_{\G, \psi}(E)^\mathfrak{B}$ equipped with an open bounded $\G$-invariant lattice $\Theta$, the quotients $\Theta/\varpi^n$ are objects of $\Mod^{\mathrm{lfin}}_{\G, \psi}(\Ou)^\mathfrak{B}$ for all $n \geq 1$. Theorem \ref{intro_centre} gives a natural action of $R_{\mathrm{tr}\overline{\rho_\mathfrak{B}}}^{\mathrm{ps}, \psi}$ on $\Theta/\varpi^n$. Upon taking the inverse limit and inverting $p$, we obtain a natural homomorphism $R_{\mathrm{tr}\overline{\rho_\mathfrak{B}}}^{\mathrm{ps}, \psi}[1/p]\rightarrow \End_\G^{\mathrm{cont}}(\Pi)$.

This action allows us to decompose the finite length objects. The Chinese remainder theorem, see \cite[Theorem 4.36]{Paskunas13}, yields an equivalence of categories:
\begin{equation} \label{CRT}
    \Ban^{\mathrm{adm}.\mathrm{fl}}_{\G, \psi}(E)^\mathfrak{B}\cong \bigoplus_{\mathfrak{n} \in \mathrm{maxSpec} R_{\mathrm{tr}\overline{\rho_\mathfrak{B}}}^{\mathrm{ps}, \psi}[1/p]} \Ban^{\mathrm{adm}.\mathrm{fl}}_{\G, \psi}(E)^\mathfrak{B}_{\mathfrak{n}},
\end{equation}
where, for a maximal ideal $\mathfrak{n}$ of $R_{\mathrm{tr}\overline{\rho_\mathfrak{B}}}^{\mathrm{ps}, \psi}[1/p]$, $\Ban^{\mathrm{adm}.\mathrm{fl}}_{\G, \psi}(E)^\mathfrak{B}_{\mathfrak{n}}$ consists of those finite length representations that are killed by a power of $\mathfrak{n}.$

\subsection{Proof of the main theorem}

Consider an absolutely irreducible representation $\Pi \in \Ban^{\mathrm{adm}}_{\G, \psi}(E)^\mathfrak{B}$. As established in the previous section, we have a natural homomorphism $x: R_{\mathrm{tr}\overline{\rho_\mathfrak{B}}}^{\mathrm{ps}, \psi}[1/p] \rightarrow \End_\G^{\mathrm{cont}}(\Pi)$. Because $\Pi$ is absolutely irreducible, Schur's Lemma \cite[Corollary 4.42]{Paskunas13} implies the isomorphism $\End_\G^{\mathrm{cont}}(\Pi) \cong E$. 

Finally, assuming the block $\mathfrak{B}$ corresponds to types (1) or (2) as defined in the preceding section, \cite[Cor. 2.25]{Paskunas2adic} provides the subsequent commutative diagram:

\begin{equation} \label{diag:lift-of-trace}
\begin{tikzcd}[column sep=large, row sep=large, ampersand replacement=\&]
    \& R_{\mathrm{tr}\overline{\rho_\mathfrak{B}}}^{\mathrm{ps}, \psi}[1/p] \arrow[d, "x"] \\
    \G_{\mathbb{Q}_p} \arrow[ru, "t^{\mathrm{univ}}"] \arrow[r, "\mathrm{tr}\check{\mathbf{V}}(\Pi)"'] \& E
\end{tikzcd}
\end{equation}
where $t^{\mathrm{univ}}$ is the universal pseudocharacter of $\G_{\Q_p}$. Let $\mathfrak{n}$ be the maximal ideal of $ R_{\mathrm{tr}\overline{\rho_\mathfrak{B}}}^{\mathrm{ps}, \psi}[1/p]$ corresponding to $\Pi$ in the above diagram. Then, by the construction of the specialization map $x$, we have $\mathfrak{n} \subseteq \mathrm{Ann}_{R_{\mathrm{tr}\overline{\rho_\mathfrak{B}}}^{\mathrm{ps}, \psi}[1/p]}(\Pi).$ Therefore, $\Pi$ is an element of $\Ban^{\mathrm{adm}.\mathrm{fl}}_{\G, \psi}(E)^\mathfrak{B}_{\mathfrak{n}}$.

\begin{proof} [{Proof of Theorem~\ref{main thm}}]
   Since $\pi$ and $\Pi(V)$ are absolutely irreducible, their continuous duals $\pi^*$ and $\Pi(V)^*$ are also absolutely irreducible. Therefore, by \cite[Theorem 1.4 and Corollary 1.6]{MR3272011}, they are of finite length. Now if $\pi^*$ and $\Pi(V)^*$ are in $\Ban_{\G,\zeta}^{\mathrm{adm}}(E)^{\mathfrak{B}}$ for different $\mathfrak{B}$, then by the block decomposition \ref{BlockDecomposition}, we are done. Assume they lie in $\Ban_{\G,\zeta}^{\mathrm{adm}}(E)^{\mathfrak{B}}$ for the same block $\mathfrak{B}$. By Lemma \ref{self-duality-of-LLC}, we have $\check{\mathbf{V}}(\Pi(V)^*)=V$, which is an absolutely irreducible representation of dimension $2$. Set $r := \check{\mathbf{V}}(\pi^*)$, which is also an absolutely irreducible $E$-representation of $\G_{\mathbb{Q}_p}$. Let $\mathfrak{n}$ be the maximal ideal of $R^{ps,\zeta\varepsilon}_{\mathrm{tr}\,\overline{\rho_{\mathfrak{B}}}}[1/p]$ corresponding to $\mathrm{tr}\,V$, and let $\mathfrak{m}$ be the maximal ideal of $R^{ps,\zeta\varepsilon}_{\mathrm{tr}\,\overline{\rho_{\mathfrak{B}}}}[1/p]$ corresponding to $\mathrm{tr}\,r$. Since $\overline{V}$ is generic, the above discussion implies $\Pi(V)^*\in \Ban_{\G,\zeta}^{\mathrm{adm}}(E)^{\mathfrak{B}}_{\mathfrak{n}}$ and $\pi^*\in \Ban_{\G,\zeta}^{\mathrm{adm}}(E)^{\mathfrak{B}}_{\mathfrak{m}}$. By the Brauer--Nesbitt theorem, the trace character is determined by the semisimplification, so $\mathrm{tr}\,V$ and $\mathrm{tr}\,r$ depend only on $V^{ss}\cong V$ and $r^{ss}\cong r$, respectively. Thus $\mathfrak{n}=\mathfrak{m}$ can occur only when $V \cong r.$

   If $\pi$ is ordinary, then by the isomorphisms in (\ref{check-V}), $r$ is one-dimensional, so $r$ can never be isomorphic to $V$. When $\pi$ is non-ordinary, the first isomorphism in (\ref{check-V}) tells us $r = \mathbf{V}(\pi)$. Therefore if $\mathbf{V}(\pi)\neq V$, then \cite[Corollary~6.8]{Paskunas-Tung} yields $\Ext_{\G,\zeta}^i(\pi^*,\Pi(V)^*)=0$ for all $i\ge0$.

   For the case $\mathbf{V}(\pi)=V=\mathbf{V}(\Pi(V))$, we have the equality $\mathfrak{n}=\mathfrak{m}$ between the maximal ideals of $R^{ps,\zeta\varepsilon}_{\mathrm{tr}\,\overline{\rho_{\mathfrak{B}}}}[1/p]$. Following the proof of \cite[Corollary~2.26]{Paskunas2adic}, we get that $\Ban^{\mathrm{adm}.\mathrm{fl}}_{\G, \zeta}(E)^\mathfrak{B}_{\mathfrak{n}}$ is anti-equivalent to the category of $A$-modules of finite length, where $A$ denotes the completion of $R^{ps,\zeta\varepsilon}_{\mathrm{tr}\,\overline{\rho_{\mathfrak{B}}}}[1/p]$ at $\mathfrak{n}$. Via this anti-equivalence, the objects \(\pi^*\) and \(\Pi(V)^*\) are both identified
with the residue field $\kappa(\mathfrak{n})$ of \(A\). Consequently, we obtain the identification:
   \begin{equation*}
       \Ext_{\G,\zeta}^i(\pi^*,\Pi(V)^*) \cong \Ext_A^i(\kappa(\mathfrak{n}),\kappa(\mathfrak{n})).
   \end{equation*}
 Moreover in the proof of \cite[Corollary~2.26]{Paskunas2adic}, it is shown that $A$ is a complete regular local ring of dimension $3$. Using the Koszul complex, we compute $\Ext_A^i(\kappa(\mathfrak{n}),\kappa(\mathfrak{n}))$, and by \cite[Exercise 4.5.6]{Weibel_1994} this group is a $\binom{3}{i}$-dimensional $\kappa(\mathfrak{n})$-vector space. As the residue field $\kappa(\mathfrak{n})$ is identified with $E$ via the specialization map $x$, we have the required dimension results.
   \end{proof}
\begin{remark}
   In the case that $V$ is residually irreducible with $\pi$ ordinary, the vanishing of $\Ext_{\G,\zeta}^1$ can be proved by purely automorphic methods using the results of \cite [Sec. 4]{Paskunas13} and \cite [Lemma 2.16] {Paskunas2adic}.  
\end{remark}

\section{Rational Cohomology of Drinfeld Space}
\label{CDNMain}

We recall the construction of the Drinfeld spaces as described in  \cite[\S 0.1]{CDNJAMS}.  For $\ell \neq p$, works of Faltings, Fargues, Harris, and Taylor establish that the \'etale cohomology of the Drinfeld spaces encodes both the classical local Langlands and the classical Jacquet-Langlands correspondence for $\GL_2(\Q_p)$. It is expected that the  $p$-adic \'etale cohomology groups similarly encode the hypothetical $p$-adic local Langlands.  

Let $\G: = \mathrm{GL}_2(\mathbb{Q}_p)$, $D$ denote the nonsplit quaternion algebra with center $\Q_p$, and $\check{\G}$ be the group of invertible elements of $D$. We write $\mathcal{O}_D$ for the maximal order of $D$, and $\varpi_D$ is a uniformizer of $\mathcal{O}_D$. The level structure is given by the sequence of subgroups  
\[
\check{\G}_n: =  
\begin{cases}  
\mathcal{O}_D^\times & \text{if } n = 0, \\
1 + \varpi^n_D \mathcal{O}_D & \text{if } n \geq 1.  
\end{cases}  
\]
Let $\Omega_{Dr,p}:=\sP^{1}_{\Q_p}-\sP^1(\Q_p)$ denote Drinfeld's $p$-adic upper half-plane. In \cite{MR0422290}, Drinfeld introduced certain covers $\Breve{\cM}_n$ of $\Omega_{Dr,p}$.
This covering is defined over $\Breve{\Q}_p:=\widehat{\Q^{nr}_p}$ and the action of the Weil group $W_{\Q_p}$ is compatible with the natural action of $\Breve{\Q}_p$.  There is a natural covering map $\Breve{\cM}_{n+1} \rightarrow \Breve{\cM}_n   \rightarrow \Omega_{Dr,p}$ compatible with the action of $\G$ and $\check{\G}$. The zeroth level of this tower is given by $ \Breve{\cM}_0= \mathbb{Z} \times \Omega_{\text{Dr},p}$, while for \(n \geq 1\), the space $\Breve{\cM}_n$ is a Galois cover of $\Breve{\cM}_0$, with the Galois group $\check{\G}_0/\check{\G}_n=\mathcal{O}_D^\times/(1 + \varpi_D^n \mathcal{O}_D)$. Define $\cM_{n,\C_p}:= \C_p \times_{\Breve{\Q}_p} \Breve{\cM}_n$ and let $\cM_{\infty}$ denote the (non-complete) projective limit of $\cM_{n, \C_p}$. We now study the structure of the $p$-adic \'etale cohomology of the Drinfeld spaces as a $\G$-representation.

 As before, let $E$ be a finite extension of $\Q_p$ with the residue field $\kappa$. Fix a de Rham representation $V$ of $\G_{\Q_p}:=\Gal(\overline{\Q}_p/\Q_p)$ of dimension $2$ with Hodge-Tate weights in $(0,1)$. One can associate to $V$ the filtered $E-(\varphi,N,\G_{\Q_p})$-module $ \mathbf{D}_{\mathrm{pst}}(V)$, a rank-two module over $E \otimes_{\Q_p} \Q_p^{\mathrm{nr}}$. By Fontaine's construction \cite{MR1293977}, we have the associated Weil–Deligne representation $\mathrm{WD}(V):=\mathrm{WD}(\mathbf{D}_{\mathrm{pst}}(V))$ of dimension $2$ over $E$. We write $\mathrm{LL}(V):=\mathrm{LL}(\mathrm{WD}(V))$ for the irreducible smooth $E$-representation of $\G$ arising via the classical local Langlands correspondence. It is an infinite-dimensional $\G$-representation and may be recovered as the space of $\G$-smooth vectors inside the $p$-adic local Langlands correspondence $\Pi(V)$. We say that $V$ is supercuspidal if $\mathrm{WD}(V)$ is irreducible (in which case 
$\mathrm{LL}(V)$ is supercuspidal). This implies, in particular, that $N = 0$ on 
$\mathbf{D}_{\mathrm{pst}}(V)$ (i.e.\ $V$ is potentially crystalline). Likewise, we denote by $\mathrm{JL}(V):=\mathrm{JL}(\mathrm{LL}(V))$ the smooth irreducible $E$-representation of $\check{\G}$, obtained via the Jacquet–Langlands correspondence. The representations $\mathrm{WD}(V)$, $\mathrm{LL}(V)$ and $\mathrm{JL}(V)$ depend only on the 
$E-(\varphi,N,\G_{\Q_p})$-module $\mathbf{D}_{\mathrm{pst}}(V)$, whereas $\Pi(V)$ genuinely depends on $V$ itself. Unlike $\mathrm{LL}(V)$, the representation $\mathrm{JL}(V)$ is finite‐dimensional over $E$.

The following fundamental result of Colmez–Dospinescu–Nizio{\l} (\cite [Thm. 0.2] {CDNJAMS}; see also \cite [Thm. 0.14]{CDNForum}) describes the $V$-isotypic component of the first $p$-adic \'etale cohomology of the finite-level Drinfeld spaces as a $\G \times \check{G}-$representation.

\begin{theorem}\label{cdn}
Let $V$ be an absolutely irreducible $E$-representation of $\G_{\Q_p}$ of dimension $\ge 2$.

\emph{(i)} 
If $V$ is supercuspidal of dimension~$2$ and of Hodge-Tate weights $0$ and $1$, then
\[
\Hom_{{\rm W}_{\Q_p}}\!\left(V,\, E\otimes_{\Q_p} \HH^1_{\et}({\mathcal {M}}_\infty,\Q_p(1))\right)
= {\mathrm {JL}}(V)\otimes_E \Pi(V)^*.
\]

\emph{(ii)}
In all other cases,
\[
\Hom_{{\rm W}_{\Q_p}}\!\left(V,\, E\otimes_{\Q_p} \HH^1_{\et}({\mathcal {M}}_\infty,\Q_p(1))\right)=0.
\]
\end{theorem}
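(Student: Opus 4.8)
\noindent\emph{Proof strategy (following \cite{CDNJAMS, CDNForum}).} The plan is to compute the $p$-adic (pro-)\'etale cohomology of each finite-level Drinfeld cover by means of the $p$-adic comparison theorem, pass to the limit over levels, and then extract $V$-isotypic components using the classical and $p$-adic local Langlands correspondences. Fixing $n$, the cover $\cM_{n,\C_p}$ is a smooth Stein curve over $\C_p$ admitting a semistable formal model, so the comparison theorem for such spaces (through syntomic cohomology, equivalently the fundamental exact sequence relating pro-\'etale cohomology, the de Rham complex and Hyodo--Kato cohomology) yields a $\G\times\check{\G}\times\G_{\Q_p}$-equivariant short exact sequence of the schematic form
\[
0 \longrightarrow \mathcal{O}(\cM_{n,\C_p})/\C_p \longrightarrow H^1_{\et}(\cM_{n,\C_p},\Q_p(1)) \longrightarrow X_n \longrightarrow 0 ,
\]
where $X_n$ is the ``finite'' Banach--Colmez-type term built from $H^1_{\mathrm{HK}}(\cM_n)$ together with its Frobenius, monodromy and Hodge filtration. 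I would then pass to the (non-complete) limit $\cM_\infty$, obtaining the analogous sequence for $\cM_\infty$; keeping the relevant derived inverse limits and completed tensor products under control at this stage is the most delicate analytic point.

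\smallskip
\noindent\emph{The de Rham / Hyodo--Kato term.} Next I would determine $H^1_{\mathrm{dR}}(\cM_n)$ and $H^1_{\mathrm{HK}}(\cM_n)$ with their $\G\times\check{\G}\times W_{\Q_p}$-actions. Via Drinfeld's moduli interpretation and the Faltings--Fargues isomorphism with the Lubin--Tate tower, this reduces to the known structure of the cohomology of the Lubin--Tate/Drinfeld tower (Carayol, Harris--Taylor, Dat, Boyer), transferred to the de Rham side through the Hyodo--Kato isomorphism and $p$-adic Hodge theory: over the supercuspidal locus, the ``smooth part'' of $\varinjlim_n H^1_{\mathrm{dR}}(\cM_n)$ is a sum of terms $\mathrm{JL}(\sigma)\otimes\mathrm{LL}(\sigma)^{\vee}\otimes\mathrm{WD}(\sigma)$ over supercuspidal types $\sigma$. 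Applying $\Hom_{W_{\Q_p}}(V,-)$ to $X_\infty$ for $V$ supercuspidal of Hodge--Tate weights $0$ and $1$ then isolates exactly the locally algebraic layer of the target, namely $\mathrm{JL}(V)\otimes(\Pi(V)^{\mathrm{l.alg}})^{*}$ with $\Pi(V)^{\mathrm{l.alg}}\cong\mathrm{LL}(V)$.

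\smallskip
\noindent\emph{The structure-sheaf term and assembly.} For the left-hand term I would invoke the description of $\mathcal{O}(\cM_\infty)$, as a $\G\times\check{\G}\times\G_{\Q_p}$-module, as the geometric incarnation of Colmez's $D\boxtimes_\delta\mathbb{P}^1$ attached to $V$ (the coverings of the Drinfeld half-plane realizing the $p$-adic local Langlands correspondence, in the work of Dospinescu and Le Bras): $V$-isotypically, $\mathcal{O}(\cM_\infty)/\C_p$ accounts, up to the $\mathrm{JL}(V)$-multiplicity, for the remaining ``Banach'' part of $\Pi(V)^{*}$, matching the submodule $\Pi(V)^{*}\otimes\delta$ of~\eqref{decompose_global_sections}. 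Splicing the two steps, $\Hom_{W_{\Q_p}}(V,\,E\otimes_{\Q_p} H^1_{\et}(\cM_\infty,\Q_p(1)))$ appears as an extension whose outer terms assemble to $\mathrm{JL}(V)\otimes_E\Pi(V)^{*}$; pinning down the extension class, so that the middle is genuinely $\mathrm{JL}(V)\otimes_E\Pi(V)^{*}$ rather than the split sum, amounts to checking that the comparison isomorphism of the first step is compatible with Colmez's $\boxtimes\mathbb{P}^1$ formalism. This establishes~(i).

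\smallskip
\noindent\emph{Vanishing, and the main obstacle.} Part~(ii) follows by running through the remaining possibilities for $V$ absolutely irreducible of dimension $\ge 2$: if $V$ is not de Rham it cannot occur in the cohomology of these potentially semistable spaces; if $V$ is de Rham with Hodge--Tate weights other than $\{0,1\}$ up to twist, the Hodge--Tate--Sen weights of $H^1_{\et}(\cM_\infty,\Q_p(1))$ force $\Hom_{W_{\Q_p}}(V,-)=0$; if $V$ is de Rham of the right weights but $\mathrm{WD}(V)$ is reducible --- so that $\mathrm{LL}(V)$ is a principal series, the only remaining option for irreducible $V$ in dimension $2$ --- then its constituents are absent from $H^1$ of the Drinfeld tower, being supported on $\mathbb{P}^1$ rather than on $\Omega_{Dr,p}$; and if $\dim V>2$ then $V$, being irreducible, cannot embed into $H^1_{\et}(\cM_\infty,\Q_p(1))$, all of whose irreducible $W_{\Q_p}$-constituents are $2$-dimensional. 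The main obstacle in this program is the first step: establishing a $p$-adic comparison theorem that is simultaneously precise enough and $\G\times\check{\G}\times\G_{\Q_p}$-equivariant for the non-quasi-compact, non-complete pro-space $\cM_\infty$, and reconciling its output with Colmez's $\boxtimes\mathbb{P}^1$ construction so that the two halves of $\Pi(V)^{*}$ glue as required --- this is the technical heart of \cite{CDNJAMS, CDNForum}.
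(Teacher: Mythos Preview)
The paper does not give a proof of this theorem at all: it is quoted verbatim as \cite[Thm.~0.2]{CDNJAMS} (see also \cite[Thm.~0.14]{CDNForum}) and used as a black box in the proof of Corollary~\ref{corollary ordinary}. Your proposal therefore goes well beyond what the paper does --- you are sketching the actual argument of Colmez--Dospinescu--Nizio\l, and the outline you give (the $p$-adic comparison/fundamental exact sequence for Stein curves, the identification of the Hyodo--Kato term via the classical decomposition of the Lubin--Tate/Drinfeld cohomology, the Dospinescu--Le Bras description of $\mathcal{O}(\cM_\infty)$, and the gluing via Colmez's $\boxtimes\,\mathbb{P}^1$ formalism) is broadly faithful to the structure of \cite{CDNJAMS, CDNForum}.
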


\begin{proof} [{Proof of Corollary~\ref{corollary ordinary}}]
     As $\mathrm{JL}(V)$ is a finite-dimensional $E$-vector space, from the above Theorem \ref{cdn},we obtain the following isomorphism of $\G$-representations:
\begin{equation} \label{CDNforG}
   \HH^1_{\rm et}(\mathcal{M}_{\infty}, E(1))[V] \cong (\Pi(V)^*)^{\oplus r}
 \end{equation}
where $V$ is an absolutely irreducible $E$-representation of $\G_{\Q_p}$ and $$r=\begin{cases}
    \dim_E(\mathrm{JL}(V)) & \text{if $V$ is supercuspidal of dimension $2$ and H-T weights $0$ and $1$}, \\
    0 & \text{otherwise}.
\end{cases}$$
By the isomorphism \ref{CDNforG}, it is enough to prove that $\Ext_{\G,\zeta}^i(\Pi(\rho_p)^*, \Pi(V)^*)=0$ for all $i \geq 0$. From Emerton's description of $\Pi(\rho_p)$ for reducible $\rho_p$ in Section \ref{Emertonmain}, we get the exact sequence of duals:
    \begin{equation}
        0 \rightarrow  \pi_2^* \rightarrow \Pi(\rho_p)^* \rightarrow \pi_1^* \rightarrow 0, 
    \end{equation}
    where $\pi_1$ and $\pi_2$ are (twists of) principal series representations on a $p$-adic Banach space. Applying $\Hom_\G(\_,\Pi(V)^*)$ to the above short exact sequence, we get the following long exact sequence 
    \begin{align*}
        0 \rightarrow ...\rightarrow \Ext_{\G,\zeta}^i(\pi_1^*, \Pi(V)^*) \rightarrow \Ext_{\G,\zeta}^i(\Pi(\rho_p)^*, \Pi(V)^*) \rightarrow \Ext_{\G,\zeta}^i(\pi_2^*, \Pi(V)^*)\rightarrow ...
    \end{align*}
    Using Theorem~\ref{main thm} on this exact sequence, we get the result.
\end{proof}
\begin{remark}
   Recently, Vanhaecke \cite[Th\'eor\`eme 1.1]{vanhaecke2024cohomologie} extended Theorem \ref{cdn} to arbitrary weights by working with the $p$-adic étale cohomology with coefficients in the symmetric powers of the universal local system on the Drinfeld tower. In this broader setting, one readily verifies that our Corollary \ref{corollary ordinary} continues to hold.
\end{remark}

\bibliographystyle{crelle}
\bibliography{domp}
\end{document}